\documentclass[preprint,12pt]{elsarticle}


\usepackage{amssymb}
 \usepackage{graphicx}
\usepackage{epsfig}


\usepackage{amsthm}





\newcommand{\sysn}{\left\{\begin{array}{rcl}}
\newcommand{\sysk}{\end{array}\right.}

\newtheorem{theorem}{Theorem}[section]

\theoremstyle{example}
\newtheorem{example}[theorem]{Example}

\newtheorem{proposition}[theorem]{Proposition}
\theoremstyle{definition}
\newtheorem{definition}[theorem]{Definition}
\newtheorem{remark}[theorem]{Remark}
\newtheorem{corollary}[theorem]{Corollary}

\journal{...}

\begin{document}

\title{Some observations on the mildly Menger property and topological games}

\author[affil1]{Manoj Bhardwaj}

\address[affil1]{Department of Mathematics, University of Delhi, New Delhi-110007, India}

\ead[affil1]{manojmnj27@gmail.com}

\author[affil2]{Alexander V. Osipov}

\address[affil2]{Krasovskii Institute of Mathematics and Mechanics, \\ Ural Federal
 University, Ural State University of Economics, Yekaterinburg, Russia}

\ead[affil2]{OAB@list.ru}

\begin{abstract}

In this paper, we defined two new games - the mildly Menger game
and the compact-clopen game. In a zero-dimensional space, the
Menger game is equivalent to the mildly Menger game and the
compact-open game is equivalent to the compact-clopen game. An
example is given for a space on which the mildly Menger game is
undetermined. Also we introduced a new game namely
$\mathcal{K}$-quasi-component-clopen game and proved that this
game is equivalent to the compact-clopen game. Then we proved that
if a topological space is an union of countably many
quasi-components of compact sets, then TWO has a winning strategy
in the mildly Menger game.

\end{abstract}

\begin{keyword}

 selection principles \sep compact-clopen game \sep zero-dimensional space \sep $\mathcal{K}$-quasi-component-clopen game \sep Menger space \sep Menger game \sep mildly Menger space \sep mildly Menger game

\MSC[2010] 54D20 \sep 54A20

\end{keyword}

\maketitle 

\section{Introduction}\label{sec1}

 In 1924, Menger \cite{H47} (see also \cite{H4}) introduced
covering property in topological spaces. A space $X$ is said to
have \textit{Menger property} if for each sequence $\langle
\mathcal{U}_n : n \in \omega \rangle$ of open covers of $X$ there
is a sequence $\langle \mathcal{V}_n : n \in \omega \rangle$ such
that for each $n$, $\mathcal{V}_n$ is a finite subset of
$\mathcal{U}_n$ and each $x \in X$ belongs to $\bigcup
\mathcal{V}_n$ for some $n$.

In covering properties, Menger property is one of the most
important property. This property is stronger than Lindel\"{o}f
and weaker than $\sigma$- compactness.

Usually, each selection principle $S_{fin}(\mathcal{A},
\mathcal{B})$ can be associated with some topological game
$G_{fin}(\mathcal{A},\mathcal{B})$. So the Menger property
$S_{fin}(\mathcal{O}, \mathcal{O})$  is associated with the Menger
game $G_{fin}(\mathcal{O}, \mathcal{O})$.

In \cite{H4} Hurewicz proved that a topological space $X$ is
Menger if and only if ONE does not have a winning strategy in the
Menger game on $X$. Thus, the Menger property can be investigated
from the point of view of topological game theory.

In (\cite{Tel}, Corollary 3), R. Telg\'{a}rsky proved that ONE has
a winning strategy in the compact-open game if and only if TWO has
a winning strategy in the Menger game. Telg\'{a}rsky also observes
(Proposition 1, \cite{Tel}) ONE having a winning strategy in the
Menger game implies TWO having a winning strategy in the
compact-open game.

Lj. D. Ko\v{c}inac define and study a version of the classical
Hurewicz covering property by using clopen covers. He call this
property {\it mildly Hurewicz}. In \cite{Ko}, game-theoretic and
Ramsey-theoretic characteristics of this property are given.

In this paper, we define two new games - the mildly Menger game
and the compact-clopen game. In a zero-dimensional space, the
Menger game is equivalent to the mildly Menger game and the
compact-open game is equivalent to the compact-clopen game.  Also
we introduced a new game namely
$\mathcal{K}$-quasi-component-clopen game and proved that this
game is equivalent to the compact-clopen game.

\section{Preliminaries}\label{sec2}

Let $(X,\tau)$ or $X$ be a topological space. We will denote by
$Cl(A)$ and $Int(A)$ the closure of $A$ and the interior of $A$,
for a subset $A$ of $X$, respectively. If a set is open and closed
in a topological space, then it is called {\it clopen}.  Recall
that a space $X$ is called {\it zero-dimensional} if it is
nonempty and has a base consisting of clopen sets, i.e., if for
every point $x\in X$ and for every neighborhood $U$ of $x$ there
exists a clopen subset $C\subseteq X$ such that $x\in C\subseteq
U$. It is clear that a nonempty subspace of a zero-dimensional
space is again zero-dimensional.

Note that separable zero-dimensional metric spaces are
homeomorphic to subsets of the irrational numbers (\cite{enc},[E,
6.2.16]). For the terms and symbols that we do not define follow
\cite{H10}.

Let $\mathcal{A}$ and $\mathcal{B}$ be collections of open covers of a topological space $X$.

The symbol $S_{fin}(\mathcal{A}, \mathcal{B})$ denotes the
selection principle that for each sequence $\langle \mathcal{U}_n
: n \in \omega \rangle$ of elements of $\mathcal{A}$ there exists
a sequence $\langle \mathcal{V}_n : n \in \omega \rangle$ such
that for each $n$, $\mathcal{V}_n$ is a finite subset of
$\mathcal{U}_n$ and $\bigcup_{n \in \omega} \mathcal{V}_n$ is an
element of $\mathcal{B}$ \cite{H1}.

In this paper $\mathcal{A}$ and $\mathcal{B}$ will be collections of the following open covers of a space $X$:

$\mathcal{O}$ : the collection of all open covers of $X$.

$\mathcal{C}_\mathcal{O}$ : the collection of all clopen covers of $X$.

\medskip

Clearly, $X$ has the Menger property if and only if $X$ satisfies
$S_{fin}(\mathcal{O}, \mathcal{O})$.

\begin{definition} \label{2.2}
A space $X$ is said to have \textit{mildly Menger property} if for
each  sequence $\langle \mathcal{U}_n : n \in \omega \rangle$ of
clopen covers of $X$ there is a sequence $\langle \mathcal{V}_n :
n \in \omega \rangle$ such that for each $n$, $\mathcal{V}_n$ is a
finite subset of $\mathcal{U}_n$ and each $x \in X$ belongs to
$\bigcup \mathcal{V}_n$ for some $n$, i.e., $X$ satisfies
$S_{fin}(\mathcal{C}_\mathcal{O}, \mathcal{C}_\mathcal{O})$.
\end{definition}

The proof of the following result easily follows from replacing
the open sets with sets of a clopen base of the topological space.

\begin{theorem} \label{a1}
For a zero-dimensional space $X$, $S_{fin}(\mathcal{C}_\mathcal{O}, \mathcal{C}_\mathcal{O})$ is equivalent to $S_{fin}(\mathcal{O}, \mathcal{O})$.
\end{theorem}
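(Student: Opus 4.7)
The plan is to prove the two implications separately; one is trivial and works for any space, while the other is where the zero-dimensionality assumption enters. For the direction $S_{fin}(\mathcal{O},\mathcal{O})\Rightarrow S_{fin}(\mathcal{C}_{\mathcal{O}},\mathcal{C}_{\mathcal{O}})$, I would observe that every clopen cover is in particular an open cover. Thus, given a sequence $\langle \mathcal{U}_n : n\in\omega\rangle$ of clopen covers, applying the Menger selection principle directly yields finite $\mathcal{V}_n\subseteq\mathcal{U}_n$ whose total union covers $X$; since each $\mathcal{V}_n$ consists of clopen sets, the same sequence witnesses $S_{fin}(\mathcal{C}_{\mathcal{O}},\mathcal{C}_{\mathcal{O}})$.

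For the converse, I would use zero-dimensionality to refine open covers to clopen covers. Given a sequence $\langle \mathcal{U}_n : n\in\omega\rangle$ of open covers, for each $n\in\omega$ and each $x\in X$ I would fix some $U_{x,n}\in\mathcal{U}_n$ with $x\in U_{x,n}$ and, using that $X$ has a clopen base, select a clopen set $C_{x,n}$ satisfying $x\in C_{x,n}\subseteq U_{x,n}$. Then $\mathcal{C}_n:=\{C_{x,n}: x\in X\}$ is a clopen cover of $X$, so the mildly Menger assumption applied to $\langle \mathcal{C}_n : n\in\omega\rangle$ produces finite $\mathcal{W}_n\subseteq\mathcal{C}_n$ with $\bigcup_{n\in\omega}\mathcal{W}_n=X$.

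To conclude, I would lift the selection back to the $\mathcal{U}_n$: for each $C\in\mathcal{W}_n$, $C$ equals $C_{x,n}$ for some $x$, and by construction $C_{x,n}\subseteq U_{x,n}\in\mathcal{U}_n$; let $\mathcal{V}_n$ be the finite collection of the associated $U_{x,n}$. Then $\mathcal{V}_n\subseteq\mathcal{U}_n$ is finite with $\bigcup\mathcal{W}_n\subseteq\bigcup\mathcal{V}_n$, so $\bigcup_{n\in\omega}\mathcal{V}_n=X$, establishing $S_{fin}(\mathcal{O},\mathcal{O})$.

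There is no real obstacle here; as the authors themselves remark, the argument just replaces open sets by members of a clopen base. The only bookkeeping point requiring attention is that, when passing from a chosen clopen $C\in\mathcal{W}_n$ back to an open set in $\mathcal{U}_n$, the assignment $C_{x,n}\mapsto U_{x,n}$ must be fixed at the outset (for example, via a choice function on $X\times\omega$), so that different occurrences of the same clopen set map to a single enclosing open set and the resulting $\mathcal{V}_n$ is genuinely a finite subfamily of $\mathcal{U}_n$.
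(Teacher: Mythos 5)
Your proof is correct and follows exactly the route the paper indicates: the paper's entire proof is the one-line remark that the result ``easily follows from replacing the open sets with sets of a clopen base,'' which is precisely your refinement argument (the trivial direction plus refining each open cover $\mathcal{U}_n$ by a clopen cover and then lifting the finite selections back via the fixed assignment $C_{x,n}\mapsto U_{x,n}$). Your explicit attention to fixing the choice function in advance is a sound bookkeeping detail that the paper leaves implicit.
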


\section{Games related to $S_{fin}(\mathcal{O}, \mathcal{O})$ and $S_{fin}(\mathcal{C}_\mathcal{O}, \mathcal{C}_\mathcal{O})$}

 The {\it selection game} $G_{fin}(\mathcal{A},\mathcal{B})$ is an
 $\omega$-length game played by two players, ONE and TWO. During round $n$,
 ONE choose $A_n\in \mathcal{A}$, followed by TWO choosing
 $B_n\in [A_n]^{<\omega}$. Player TWO wins in the case that $\bigcup\{B_n:
 n<\omega\}\in \mathcal{B}$, and Player ONE wins otherwise.

We consider the following selection games:

\medskip

$\bullet$  $G_{fin}(\mathcal{O}, \mathcal{O})$ - the {\it Menger
game}.

\medskip

$\bullet$ $G_{fin}(\mathcal{C}_\mathcal{O},
\mathcal{C}_\mathcal{O})$ - the {\it mildly Menger game}.

\medskip

In \cite{H4} Hurewicz proves:

\begin{theorem}\label{hu}(Hurewicz) A topological space has the Menger property $S_{fin}(\mathcal{O},
\mathcal{O})$ if, and only if, ONE has no winning strategy in the
Menger game $G_{fin}(\mathcal{O}, \mathcal{O})$.
\end{theorem}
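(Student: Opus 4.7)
The plan is to establish both implications by contraposition. For the easy direction, suppose $X$ lacks the Menger property. Fix a witnessing sequence $\langle \mathcal{U}_n : n \in \omega\rangle$ of open covers such that for every choice $\mathcal{V}_n\in[\mathcal{U}_n]^{<\omega}$ the union $\bigcup_n\mathcal{V}_n$ fails to cover $X$. Let ONE play $\sigma(\mathcal{V}_0,\ldots,\mathcal{V}_{n-1}):=\mathcal{U}_n$, ignoring TWO's past moves. Every resulting play yields exactly such a union, which fails to cover $X$, so ONE wins and $\sigma$ is a winning strategy.

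For the hard direction, assume $\sigma$ is a winning strategy for ONE; the goal is to construct a sequence of open covers showing $X$ is not Menger. First reduce to the Lindel\"of case: if $X$ has an open cover $\mathcal{U}$ with no countable subcover, set $\mathcal{W}_n:=\mathcal{U}$ for all $n$; a countable union of finite subfamilies of $\mathcal{U}$ is countable and hence cannot cover. So assume $X$ is Lindel\"of and refine so that every $\sigma$-response is countable. Then unfold $\sigma$ over the countable tree $T=\omega^{<\omega}$: to each node $s$ of length $n$ assign the countable cover $\mathcal{U}_s:=\sigma(\text{moves of TWO recorded by }s)$ using a fixed enumeration of $[\mathcal{U}_t]^{<\omega}$ at each proper initial segment $t$ of $s$. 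Every branch of $T$ then encodes a legal play against $\sigma$, which $\sigma$ wins; so some point of $X$ escapes TWO's cumulative moves along every branch.

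Set $\mathcal{W}_n:=\bigcup\{\mathcal{U}_s:s\in\omega^n\}$, a countable open cover of $X$. The claim is that $\langle\mathcal{W}_n\rangle$ witnesses non-Menger: if there were finite $E_n\subseteq\mathcal{W}_n$ with $\bigcup_n E_n\supseteq X$, one would extract a single branch $b\in\omega^\omega$ of $T$ along which (a reorganization of) the $E_n$ appears as TWO's moves, contradicting that $\sigma$ wins along $b$.

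The main obstacle is this final branch-extraction: each finite $E_n$ may draw sets from several level-$n$ nodes of $T$, so it does not pin down a single branch directly. The standard remedy is either to enlarge each $\mathcal{W}_n$ so that every finite selection already lies inside some single $\mathcal{U}_s$, or to apply a K\"onig-lemma argument on the finitely-branching subtree of level-$n$ nodes actually hit by the $E_n$'s. This combinatorial bookkeeping is the delicate core of the argument; the rest is a routine unfolding of the definitions and the winning condition.
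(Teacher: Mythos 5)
The paper itself offers no proof of this statement --- it is quoted as Hurewicz's theorem with a citation to \cite{H4} --- so your proposal can only be measured against the known proofs. Your easy direction (not Menger $\Rightarrow$ ONE has a winning strategy, by playing the witnessing covers regardless of TWO's moves) is correct and complete. The hard direction, however, has a genuine gap, and it sits exactly at the step you yourself flag as ``the delicate core'': extracting a single legal run of the game from Menger selections $E_n\subseteq\mathcal{W}_n$, where $\mathcal{W}_n=\bigcup\{\mathcal{U}_s : s\in\omega^n\}$. Everything before that point (the Lindel\"of reduction, passing to countable covers, unfolding $\sigma$ over the tree $\omega^{<\omega}$) is routine and correct; everything after it is precisely the content of the theorem, and neither of the two remedies you gesture at closes it.

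Both remedies fail for the same structural reason: the covers $\mathcal{U}_s$ attached to distinct nodes of the same level are values of $\sigma$ at incompatible histories and bear no relation to one another. For the first remedy, no enlargement of $\mathcal{W}_n$ can force every finite selection to lie inside a single $\mathcal{U}_{s_n}$; and even if it could, that would not suffice, since the nodes $s_n$ would additionally have to form the run generated by the selections themselves (i.e.\ $s_{n+1}$ must be $s_n$ extended by the index coding $E_n$), which localization alone does not give. For the second remedy, the subtree generated by the nodes hit by the $E_n$'s is not finitely branching: each $E_n$ meets only finitely many level-$n$ nodes, but at a fixed level $i$ the initial segments of hit nodes accumulate over all $n\ge i$, so K\"onig's lemma does not apply; worse, even an infinite branch passing through infinitely many hit nodes would only allow TWO to play the selections made at nodes on that branch, while the Menger property guarantees only that the union of all selections over the whole tree covers $X$ --- whatever coverage sits at off-branch nodes is simply lost, and nothing forces the on-branch part to cover $X$. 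This ``off-branch loss'' is the actual difficulty of Hurewicz's theorem; the correct arguments (see the proof in \cite{H28a}, or Hurewicz's original work \cite{H4,H5}) avoid it by handing the Menger property a more carefully designed sequence of covers, built from the partial plays themselves so that the selections automatically determine a legal run, rather than by post-hoc branch extraction from the level-wise union covers. As written, your proposal establishes only the easy direction.
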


Telg\'{a}rsky proved that a metric space $X$ is $\sigma$-compact
if, and only if, TWO has a winning strategy in the Menger game.

If player has a winning strategy, we write $Player \uparrow
G_{fin}(\mathcal{A},\mathcal{B})$. If player has no winning
strategy, we write $Player \not\uparrow
G_{fin}(\mathcal{A},\mathcal{B})$.

Note that the following chain of implications always holds:

\begin{center}

$X$ is $\sigma$-compact \\
$\Downarrow$ \\ $TWO \uparrow G_{fin}(\mathcal{O}, \mathcal{O})$ $\Leftrightarrow$ $ONE \uparrow$ the compact-open game \\ $\Downarrow$ \\ $ONE \not\uparrow G_{fin}(\mathcal{O}, \mathcal{O})$ \\ $\Updownarrow$ \\
$X$ has the Menger property.

\end{center}


The {\it compact-open game} ({\it compact-clopen game}) on a space
$X$ is played according to the following rules:

In each inning $n \in \omega$, ONE picks a compact set $K_n
\subseteq X$, and then TWO chooses an open (clopen) set $U_n
\subseteq X$ with $K_n \subseteq U_n$. At the end of the play
\begin{center}
$K_0, U_0, K_1, U_1, K_2, U_2, . . . , K_n, U_n, . . . $,
\end{center}
the winner is ONE if $X \subseteq \bigcup_{n \in \omega} U_n$, and
TWO otherwise.

\bigskip

Let $\mathcal{K}$ denotes the collection of all compact subsets of
a space $X$. We denote the collection of all clopen subsets of a
space by $\tau_{c}$ and the collection of all finite subsets of
$\tau_{c}$ by $\tau^{< \omega}_{c}$.

A strategy for ONE in the compact-clopen game on a space $X$ is a
function $\varphi : \tau^{< \omega}_{c} \rightarrow \mathcal{K}$.

A strategy for TWO in the compact-clopen game on a space $X$ is a
function $\psi : \mathcal{K}^{< \omega} \rightarrow \tau_{c}$ such
that, for all $\langle K_0, K_1,..., K_n \rangle \in
\mathcal{K}^{< \omega} \setminus \{\langle \rangle\}$, we have
$K_n \subseteq \psi(\langle K_0,..., K_n \rangle$) = $U_n$.

A strategy $\varphi : \tau^{< \omega}_{c} \rightarrow \mathcal{K}$
for ONE in the compact-clopen game on $X$ is a winning strategy
for ONE if, for every sequence $\langle U_n : n \in \omega
\rangle$ of clopen subsets of a space $X$ such that $\forall n \in
\omega$, $K_n = \varphi(\langle U_0, U_1,..., U_{n-1} \rangle)
\subseteq U_n$, we have $X \subseteq \bigcup_{n \in \omega} U_n$.

A strategy $\psi : \mathcal{K}^{< \omega} \rightarrow \tau_{c}$
for TWO in the compact-clopen game on $X$ is a winning strategy
for TWO if, for every sequence $\langle K_n : n \in \omega
\rangle$ of compact subsets of a space $X$, we have $X \subseteq
\bigcup_{n \in \omega} (\psi(\langle K_0, K_1,..., K_n \rangle) =
U_n)$.

\medskip
Recall that two games $G$ and $G^{'}$ are equivalent (isomorphic)
if
\begin{enumerate}
\item ONE has a winning strategy in $G$ if and only if ONE has a
winning strategy in $G^{'}$; \item TWO has a winning strategy in
$G$ if and only if TWO has a winning strategy in $G^{'}$.
\end{enumerate}

\medskip

The proof of the following result easily follows from replacing
the open sets with sets of a clopen base of the topological space.

\begin{theorem} \label{a4}
For a zero-dimensional space, the following statements hold:
\begin{enumerate}
\item
The game $G_{fin}(\mathcal{C}_\mathcal{O}, \mathcal{C}_\mathcal{O})$ is equivalent to the game $G_{fin}(\mathcal{O}, \mathcal{O})$.
\item
The compact-clopen game is equivalent to the compact-open game.
\end{enumerate}
\end{theorem}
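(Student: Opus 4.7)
The claim splits into eight implications (two for each part, for each player). Four of them are immediate from the inclusions $\mathcal{C}_\mathcal{O}\subseteq\mathcal{O}$ and $\tau_c\subseteq\tau$: a winning strategy for ONE in the mildly Menger game is already a legal strategy in the Menger game (its outputs are clopen covers, hence open covers), and dually a winning strategy for TWO in the Menger game restricts to one in the mildly Menger game; on the compact side, a winning strategy for ONE in compact-open automatically wins compact-clopen (TWO's clopen moves form a subfamily of the open moves already defeated), while a winning strategy for TWO in compact-clopen automatically wins compact-open (clopen outputs are open outputs). The real content lies in the four remaining implications, each of which uses one of two standard consequences of zero-dimensionality: (i) every open cover admits a clopen refinement, by replacing each member with basic clopen subsets; and (ii) for any compact $K\subseteq U$ with $U$ open, there is a clopen $V$ with $K\subseteq V\subseteq U$, obtained by covering $K$ with basic clopen neighborhoods inside $U$ and taking a finite subcover.

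For part (1), to lift a winning strategy $\sigma$ of ONE in the Menger game to one in the mildly Menger game, I would simulate a Menger play inside the mildly Menger play. Whenever $\sigma$ prescribes an open cover $\mathcal{U}_n$, the mildly Menger strategy plays a clopen refinement $\mathcal{V}_n$ together with a fixed choice function $V\mapsto U(V)\in\mathcal{U}_n$ witnessing $V\subseteq U(V)$; when TWO responds with a finite $\mathcal{W}_n\subseteq\mathcal{V}_n$, translate it into $\mathcal{W}_n^{\ast}=\{U(V):V\in\mathcal{W}_n\}\subseteq\mathcal{U}_n$ and feed $\mathcal{W}_n^{\ast}$ back into $\sigma$ for the next round. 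Because $\bigcup\mathcal{W}_n\subseteq\bigcup\mathcal{W}_n^{\ast}$, a TWO-win in the real mildly Menger play would force a TWO-win in the simulated Menger play, contradicting that $\sigma$ wins for ONE. The symmetric argument---refining ONE's open cover in the Menger game to a clopen cover, applying TWO's mildly Menger strategy to this refinement, and expanding the finite clopen response through the choice function---lifts TWO's winning strategy from mildly Menger to Menger.

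For part (2) I would use (ii) in place of (i). To lift a winning strategy $\sigma$ of ONE from the compact-clopen game to the compact-open game, when TWO answers $K_n$ with an open $U_n\supseteq K_n$, sandwich it with a clopen $V_n$ satisfying $K_n\subseteq V_n\subseteq U_n$ and feed $V_n$ into $\sigma$ to choose the next compact set; since $\sigma$ is winning for ONE in the simulated compact-clopen play, $X\subseteq\bigcup_n V_n\subseteq\bigcup_n U_n$. Dually, to lift a winning strategy $\psi$ of TWO from compact-open to compact-clopen, after $\psi$ returns an open $U_n\supseteq K_n$, have TWO instead play a clopen $V_n$ with $K_n\subseteq V_n\subseteq U_n$; then $\bigcup_n V_n\subseteq\bigcup_n U_n$ together with $X\not\subseteq\bigcup_n U_n$ forces $X\not\subseteq\bigcup_n V_n$, and TWO still wins. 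The main obstacle throughout is purely notational---keeping the real and simulated plays in sync via the auxiliary refinements and choice functions---and no idea beyond (i) and (ii) is needed, which matches the authors' remark that the result follows from the clopen-refinement observation.
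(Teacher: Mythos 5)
Your proposal is correct and takes essentially the same route the paper intends: the paper offers no detailed argument, only the remark that the theorem ``easily follows from replacing the open sets with sets of a clopen base,'' and your eight-implication decomposition --- four trivial inclusions plus four simulations using clopen refinements of open covers and clopen sets sandwiched between a compact set and an open superset --- is exactly that replacement carried out in full.
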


Recall that a topological space $X$ is  {\it mildly compact}, if
every clopen cover of $X$ contains a finite subcover; and {\it
mildly Lindel\"{o}f} if every clopen cover has a countable
subcover \cite{st}. A space $X$ is a {\it $\sigma$-mildly compact
space}, if $X=\bigcup_{i\in \omega} A_i$ where $A_i$ is a mildly
compact space for all $i\in \omega$.

Note that the mildly Menger property is stronger than mildly
Lindel\"{o}f and weaker than $\sigma$--mildly compactness.

The {\it mildly compact-clopen game} on a space $X$ is played
according to the following rules :

In each inning $n \in \omega$, ONE picks a mildly compact set $K_n
\subseteq X$, and then TWO chooses a clopen set $U_n \subseteq X$
with $K_n \subseteq U_n$. At the end of the play
\begin{center}
$K_0, U_0, K_1, U_1, K_2, U_2, . . . , K_n, U_n, . . . $,
\end{center}
the winner is ONE if $X \subseteq \bigcup_{n \in \omega} U_n$, and
TWO otherwise.

\begin{theorem} \label{d1}
For a topological space $X$ the following statements hold:
\begin{enumerate}
\item If ONE has a winning strategy in the compact-clopen game,
then ONE has a winning strategy in the mildly compact-clopen game
on $X$. \item If ONE has a winning strategy in the mildly
compact-clopen game, then TWO has a winning strategy in the game
$G_{fin}(\mathcal{C}_\mathcal{O}, \mathcal{C}_\mathcal{O})$ on
$X$. \item If $X$ is a subset of the irrational numbers (a
zero-dimensional second-countable space) and TWO has a winning
strategy in the game $G_{fin}(\mathcal{C}_\mathcal{O},
\mathcal{C}_\mathcal{O})$, then $X$ is $\sigma$-compact. \item If
$X$ is $\sigma$-compact, then ONE has a winning strategy in the
compact-clopen game. \item If ONE has a winning strategy in the
game $G_{fin}(\mathcal{C}_\mathcal{O}, \mathcal{C}_\mathcal{O})$,
then TWO has a winning strategy in the mildly compact-clopen game
on $X$. \item If TWO has a winning strategy in the mildly
compact-clopen game, then TWO has a winning strategy in the
compact-clopen game. \item If $X$ is a zero-dimensional space and
TWO has a winning strategy in the compact-clopen game, then TWO
has a winning strategy in the mildly compact-clopen game.
\end{enumerate}
\end{theorem}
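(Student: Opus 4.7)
The plan is to handle the seven clauses in an order that isolates the two substantial Telg\'arsky-style arguments from the routine monotonicity and base-change steps. Clauses (1) and (6) are immediate from the inclusion \emph{compact $\Rightarrow$ mildly compact}: a winning strategy for ONE (respectively TWO) in the compact-clopen game uses moves that are still legal in the mildly compact-clopen game, with TWO's clopen response entirely unchanged. Clause (4) is elementary: if $X=\bigcup_{n\in\omega} K_n$ with each $K_n$ compact, then ONE's strategy of playing $K_n$ at round $n$ in the compact-clopen game already forces $\bigcup_n U_n\supseteq\bigcup_n K_n=X$ regardless of TWO's clopen responses.

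Clause (7) rests on the observation that in a zero-dimensional space every mildly compact subspace is in fact compact: any open cover can be refined to a clopen cover using the clopen base, reduced to a finite clopen subcover by mild compactness, and lifted back to a finite subcover of the original. Hence ONE's legal moves in the two games coincide and TWO's strategy transfers. Clause (3) is a consequence of already-quoted machinery: subsets of the irrationals are zero-dimensional and metrizable, so Theorem \ref{a4}(1) converts TWO's winning strategy in $G_{fin}(\mathcal{C}_\mathcal{O},\mathcal{C}_\mathcal{O})$ into one in the ordinary Menger game, and the Telg\'arsky characterization quoted just after Theorem \ref{hu} delivers $\sigma$-compactness.

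For (2), let $\varphi$ be a winning strategy for ONE in the mildly compact-clopen game. I would construct TWO's strategy in $G_{fin}(\mathcal{C}_\mathcal{O},\mathcal{C}_\mathcal{O})$ by running $\varphi$ in parallel with the selection game: in round $n$, given the clopen covers $\mathcal{U}_0,\ldots,\mathcal{U}_n$ played by ONE, let $K_i=\varphi(\langle U_0,\ldots,U_{i-1}\rangle)$ for $i\le n$, where each $U_i=\bigcup\mathcal{F}_i$ is built inductively by using a fixed choice function to pick a finite $\mathcal{F}_i\subseteq\mathcal{U}_i$ covering the mildly compact set $K_i$ (such an $\mathcal{F}_i$ exists because $\mathcal{U}_i$ restricts to a clopen cover of $K_i$). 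TWO's move in round $n$ is $\mathcal{F}_n$. Since $K_0,U_0,K_1,U_1,\ldots$ is a $\varphi$-play, $X\subseteq\bigcup_n U_n=\bigcup_n\bigcup\mathcal{F}_n$, which is exactly the winning condition for TWO in the selection game.

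Clause (5) is the mirror of (2) and is the step I expect to require the most care, because the roles of covers and compact sets are now reversed. Let $\sigma$ be a winning strategy for ONE in $G_{fin}(\mathcal{C}_\mathcal{O},\mathcal{C}_\mathcal{O})$. Define TWO's strategy in the mildly compact-clopen game by recursion on the history $\langle K_0,\ldots,K_n\rangle$ as follows: simulate the mildly Menger game with $\mathcal{U}_0=\sigma(\langle\rangle)$, use a fixed choice function to pick a finite clopen subfamily $\mathcal{F}_0\subseteq\mathcal{U}_0$ covering $K_0$, then set $\mathcal{U}_1=\sigma(\langle\mathcal{F}_0\rangle)$ and pick $\mathcal{F}_1\subseteq\mathcal{U}_1$ covering $K_1$, and so on; TWO's response is $U_n=\bigcup\mathcal{F}_n$, a clopen set containing $K_n$. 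If TWO lost the mildly compact-clopen play, then $X\subseteq\bigcup_n\bigcup\mathcal{F}_n$, meaning the simulated play is won by TWO in the mildly Menger game, contradicting the winningness of $\sigma$. The delicate point is that the choice function must be fixed in advance so that $U_n$ is genuinely a function of $\langle K_0,\ldots,K_n\rangle$ and not of any auxiliary data hidden in the recursion.
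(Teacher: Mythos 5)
Your proposal is correct and follows essentially the same route as the paper: clauses (1), (4), (6) by monotonicity of legal moves, (7) via ``mildly compact $=$ compact in zero-dimensional spaces,'' (3) via Theorem~\ref{a4} plus the Telg\'arsky--Scheepers characterization, and (2), (5) by the same strategy-translation arguments between the selection game and the (mildly) compact-clopen game. Your added care in (2) and (5) about feeding the clopen sets $U_i=\bigcup\mathcal{F}_i$ (rather than the families themselves) into $\varphi$, and fixing a choice function so TWO's moves depend only on the history, merely makes explicit what the paper leaves implicit.
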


The following diagrams could be helpful in order to show the big
picture where $C.$ $CL(X)$ and $MC.$ $CL(X)$ are designations for
the compact-clopen game and the mildly compact-clopen game,
respectively.

\begin{center}
$X$ is $\sigma$-compact  \, \, \, \, \, \, \, \, \, \, \, \, \, \, \, \, \, \, \, \, \, \, \, \, \, \, \, \, \, \, \, \, \, \, \, \, \, \, \, \, \, \, \, \, \\
$\Downarrow$ (4) \, \, \, \, \, \, \, \, \, \, \, \, \, \, \, \,
\, \, \, \, \, \, \,
\, \, \, \, \,  \, \, \, \, \, \, \, \, \, \, \, \, \, \, \, \, \\
$ONE \uparrow C.$ $CL(X)$ ${\Rightarrow\atop (1)}$ $ONE \uparrow
MC.$ $CL(X)$ ${\Rightarrow\atop (2)}$
$TWO \uparrow G_{fin}(\mathcal{C}_\mathcal{O}, \mathcal{C}_\mathcal{O})$ \\
$\Downarrow$ \, \, \, \, \, \, \, \, \, \, \, \, \, \, \, \, $\Downarrow$ \, \, \, \, \, \, \, \, \, \, \, \, \, \, \, \, $\Downarrow$ \\
$TWO \not\uparrow C.$ $CL(X)$ ${\Rightarrow\atop (6)}$ $TWO
\not\uparrow MC.$ $CL(X)$ ${\Rightarrow\atop (5)}$ $ONE
\not\uparrow$ $G_{fin}(\mathcal{C}_\mathcal{O},
\mathcal{C}_\mathcal{O})$.
\end{center}

\medskip

 $TWO \uparrow G_{fin}(\mathcal{C}_\mathcal{O},
\mathcal{C}_\mathcal{O})$ + $(X\subseteq \omega^{\omega})$
${\Rightarrow\atop (3)}$ $X$ is $\sigma$-compact.

\medskip

\medskip

 $TWO \not\uparrow MC.$ $CL(X)$+$(X$ is a zero-dim. space$)$
${\Rightarrow\atop (7)}$ $TWO \not\uparrow C.$ $CL(X)$.

\begin{proof}
\begin{enumerate}
\item The proof follows from the fact that every compact subset is
mildly compact. \item Consider a winning strategy $\varphi$ for
ONE in the mildly compact-clopen game. To obtain a winning
strategy, we use $\varphi$ for TWO in the game
$G_{fin}(\mathcal{C}_\mathcal{O}, \mathcal{C}_\mathcal{O})$ on
$X$.

ONE starts $G_{fin}(\mathcal{C}_\mathcal{O},
\mathcal{C}_\mathcal{O})$ with his initial move $\mathcal{U}_0$, a
cover by clopen sets of $X$. Then TWO replies with a finite subset
$\mathcal{V}_0$ of $\mathcal{U}_0$ such that $K_0 =
\varphi(\langle \rangle) \subseteq \bigcup \mathcal{V}_0$.

If ONE plays $\mathcal{U}_n$ in $n$th inning, then TWO replies
with a finite subset $\mathcal{V}_n$ of $\mathcal{U}_n$ such that
$K_n = \varphi(\langle \mathcal{V}_0, \mathcal{V}_1,
\mathcal{V}_2,...,\mathcal{V}_{n-1} \rangle) \subseteq \bigcup
\mathcal{V}_n$.

In the same manner, the sets $ \mathcal{V}_0, \mathcal{V}_1,
\mathcal{V}_2,...,\mathcal{V}_n,... $ played by TWO in the play of
the game $G_{fin}(\mathcal{C}_\mathcal{O},
\mathcal{C}_\mathcal{O})$ are same as played by TWO in the
following play of the mildly compact-clopen game on $X$ :
\begin{center}
$\langle K_0 = \varphi(\langle \rangle), \mathcal{V}_0, K_1 = \varphi(\langle \mathcal{V}_0 \rangle), \mathcal{V}_1,..., K_n = \varphi(\langle \mathcal{V}_0, \mathcal{V}_1, \mathcal{V}_2,...,\mathcal{V}_{n-1} \rangle), \mathcal{V}_n,... \rangle$.
\end{center}
In the above play of the mildly compact-clopen game on $X$, ONE
uses his winning strategy $\varphi$, so $\bigcup_{n \in \omega}
\bigcup \mathcal{V}_n = X$. This implies that
\begin{center}
$\langle \mathcal{U}_0, \mathcal{V}_0, \mathcal{U}_1, \mathcal{V}_1,..., \mathcal{U}_n, \mathcal{V}_n,... \rangle$
\end{center}
is a play of the $G_{fin}(\mathcal{C}_\mathcal{O},
\mathcal{C}_\mathcal{O})$ on $X$ in which TWO has a winning
strategy.

\item If TWO has a winning strategy in
$G_{fin}(\mathcal{C}_\mathcal{O}, \mathcal{C}_\mathcal{O})$, then
TWO has a winning strategy in $G_{fin}(\mathcal{O}, \mathcal{O})$
by Theorem \ref{a4}. Rest of the proof follows from Theorem 1 \cite{H28b}

\item The proof is obvious. \item
 Consider a winning strategy $\varphi$ for ONE in $G_{fin}(\mathcal{C}_\mathcal{O}, \mathcal{C}_\mathcal{O})$. To obtain a winning strategy, we
 use $\varphi$ for TWO in the mildly compact-clopen game on $X$.

ONE starts the mildly compact-clopen game with his initial move
$K_0$, a mildly compact subset of $X$. Then TWO replies with
$\bigcup \mathcal{V}_0$ containing $K_0$ such that $\mathcal{V}_0$
is a finite subset of $\mathcal{U}_0 = \varphi(\langle \rangle)$.

If ONE plays $K_1$ his next move, then TWO replies with $\bigcup
\mathcal{V}_1$ containing $K_1$ such that $\mathcal{V}_1$ is a
finite subset of $\mathcal{U}_1  = \varphi(\langle \mathcal{V}_0
\rangle)$.

If ONE plays $K_2$ his next move, then TWO replies with $\bigcup
\mathcal{V}_2$ containing $K_2$ such that $\mathcal{V}_2$ is a
finite subset of $\mathcal{U}_2 = \varphi(\langle \mathcal{V}_0,
\mathcal{V}_1 \rangle)$ and so on.

If ONE plays $K_n$ in $n$th inning, then TWO replies with $\bigcup
\mathcal{V}_n$ containing $K_n$ such that $\mathcal{V}_n$ is a
finite subset of $\mathcal{U}_n = \varphi(\langle \mathcal{V}_0,
\mathcal{V}_1, \mathcal{V}_2,...,\mathcal{V}_{n-1} \rangle)$.

In the same manner, the sets $ \bigcup \mathcal{V}_0, \bigcup
\mathcal{V}_1, \bigcup \mathcal{V}_2,..., \bigcup
\mathcal{V}_n,... $ played by TWO in the play of mildly
compact-clopen game are same as played by TWO in the following
play of the $G_{fin}(\mathcal{C}_\mathcal{O},
\mathcal{C}_\mathcal{O})$ on $X$ :
\begin{center}
$\langle \mathcal{U}_0 = \varphi(\langle \rangle), \mathcal{V}_0, \mathcal{U}_1 = \varphi(\langle \mathcal{V}_0 \rangle), \mathcal{V}_1,..., \mathcal{U}_n = \varphi(\langle \mathcal{V}_0, \mathcal{V}_1, \mathcal{V}_2,...,\mathcal{V}_{n-1} \rangle), \mathcal{V}_n,... \rangle$.
\end{center}
In the above play of $G_{fin}(\mathcal{C}_\mathcal{O},
\mathcal{C}_\mathcal{O})$ on $X$, ONE uses his winning strategy
$\varphi$, so $\bigcup_{n \in \omega} \bigcup \mathcal{V}_n \neq
X$. This implies that
\begin{center}
$\langle K_0, \bigcup \mathcal{V}_0, K_1, \bigcup \mathcal{V}_1,..., K_n, \bigcup \mathcal{V}_n,... \rangle$
\end{center}
is a play of the mildly compact-clopen game on $X$ in which TWO
has a winning strategy. \item The proof is obvious. \item The
proof follows from the fact that in a zero-dimensional space,
every mildly compact space is compact.
\end{enumerate}
\end{proof}

\begin{corollary}
For a zero-dimensional separable metric space $(X,d)$, the
following statements are equivalent:
\begin{enumerate}
\item $X$ is $\sigma$-compact; \item TWO has a winning strategy in
the game $G_{fin}(\mathcal{O}, \mathcal{O})$; \item TWO has a
winning strategy in the game $G_{fin}(\mathcal{C}_\mathcal{O},
\mathcal{C}_\mathcal{O})$; \item ONE has a winning strategy in the
mildly compact-clopen game; \item ONE has a winning strategy in
the compact-clopen game; \item ONE has a winning strategy in the
compact-open game.
\end{enumerate}
\end{corollary}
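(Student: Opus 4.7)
The plan is to establish the cycle $(1) \Rightarrow (5) \Rightarrow (4) \Rightarrow (3) \Rightarrow (1)$ using the implications already assembled in Theorem \ref{d1}, and then to pick up $(2)$ and $(6)$ as equivalents of $(3)$ and $(5)$ respectively via the zero-dimensionality hypothesis and Theorem \ref{a4}. None of the hard technical work lives inside this corollary: its whole content is a bookkeeping operation on the arrows produced earlier in the paper.

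Concretely, I would walk around the cycle in this order. First, $(1) \Rightarrow (5)$ is Theorem \ref{d1}(4). Then $(5) \Rightarrow (4)$ is Theorem \ref{d1}(1) (every compact set is mildly compact, so ONE's winning strategy in the compact-clopen game can be reused verbatim in the mildly compact-clopen game). Next, $(4) \Rightarrow (3)$ is Theorem \ref{d1}(2), where a winning strategy $\varphi$ for ONE in the mildly compact-clopen game is converted into a winning strategy for TWO in $G_{fin}(\mathcal{C}_\mathcal{O},\mathcal{C}_\mathcal{O})$ by letting TWO respond to ONE's clopen cover $\mathcal{U}_n$ with a finite subfamily covering the mildly compact set $\varphi(\langle \mathcal{V}_0,\dots,\mathcal{V}_{n-1}\rangle)$. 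To close the cycle with $(3) \Rightarrow (1)$, I would invoke Theorem \ref{d1}(3); the only thing to check is that its hypothesis applies, but since $(X,d)$ is a zero-dimensional separable metric space, the classical fact recorded in Section 2 tells us $X$ is homeomorphic to a subset of the irrational numbers $\omega^\omega$, and the conclusion $\sigma$-compactness is a topological invariant.

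It remains to slot in $(2)$ and $(6)$. Because $X$ is zero-dimensional, Theorem \ref{a4}(1) gives $G_{fin}(\mathcal{C}_\mathcal{O},\mathcal{C}_\mathcal{O}) \equiv G_{fin}(\mathcal{O},\mathcal{O})$ as games, so TWO has a winning strategy in one if and only if in the other; this yields $(2) \Leftrightarrow (3)$. Similarly, Theorem \ref{a4}(2) identifies the compact-clopen game with the compact-open game on zero-dimensional spaces, which gives $(5) \Leftrightarrow (6)$. Together with the cycle above, all six conditions are equivalent.

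The only step that could be described as a genuine obstacle is $(3) \Rightarrow (1)$, and even there the difficulty has been outsourced: the substantive work is done in \cite{H28b}, with Theorem \ref{d1}(3) serving as the bridge. Everything else is a matter of noting that the zero-dimensional hypothesis on $X$ simultaneously (a) permits the embedding into $\omega^\omega$ needed for that bridge and (b) collapses the clopen-version games onto their open-cover counterparts via Theorem \ref{a4}.
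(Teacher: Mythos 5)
Your proposal is correct and follows exactly the route the paper intends: the corollary is stated without a separate proof precisely because it is the cycle $(1)\Rightarrow(5)\Rightarrow(4)\Rightarrow(3)\Rightarrow(1)$ given by items (4), (1), (2), (3) of Theorem \ref{d1} (using the embedding of zero-dimensional separable metric spaces into the irrationals to apply item (3)), with $(2)$ and $(6)$ attached via the game equivalences of Theorem \ref{a4}. Your bookkeeping, including the check that the hypothesis of Theorem \ref{d1}(3) applies and that $\sigma$-compactness transfers along the homeomorphism, matches the paper's argument.
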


\section{$\mathcal{K}$-quasi-component-clopen game}

Now we consider a new game, namely $\mathcal{K}$-quasi-component-clopen game.

A subset $F$ of a space $X$ is called a {\it quasi-component of a
compact subset} $K$ of $X$ if $F=\bigcap \{U: U$ is clopen in $X$,
$K\subseteq U \}$.

The {\it $\mathcal{K}$-quasi-component-clopen game}
$Q_{\mathcal{K}}C(X)$ on a space $X$ is played according to the
following rules :

In each inning $n \in \omega$, ONE picks a quasi-component $A_n$
of a compact subset $K_n$ of $X$, and then TWO chooses a clopen
set $U_n \subseteq X$ with $A_n \subseteq U_n$. At the end of the
play
\begin{center}
$A_0, U_0, A_1, U_1, A_2, U_2, . . . , A_n, U_n, . . . $,
\end{center}
the winner is ONE if $X \subseteq \bigcup_{n \in \omega} U_n$, and
TWO otherwise.

We denote the collection of all quasi-components of compact subsets of a space by
$Q_\mathcal{K}$ and the collection of all finite subsets of $Q_\mathcal{K}$ by $Q_\mathcal{K}^{< \omega}$.

A strategy for ONE in the game $Q_{\mathcal{K}}C(X)$ on a space
$X$ is a function $\varphi : \tau^{< \omega}_{c} \rightarrow
Q_\mathcal{K}$.

A strategy for TWO in the game $Q_{\mathcal{K}}C(X)$ on a space
$X$ is a function $\psi : Q_\mathcal{K}^{< \omega} \rightarrow
\tau_{c}$ such that, for all $\langle A_0, A_1,..., A_n \rangle
\in Q_\mathcal{K}^{< \omega} \setminus \{\langle \rangle\}$, we
have $A_n \subseteq \psi(\langle A_0,..., A_n \rangle$) = $U_n$.

A strategy $\varphi : \tau^{< \omega}_{c} \rightarrow
Q_\mathcal{K}$ for ONE in the game $Q_{\mathcal{K}}C(X)$ on $X$ is
a winning strategy for ONE if, for every sequence $\langle U_n : n
\in \omega \rangle$ of clopen subsets of a space $X$ such that
$\forall n \in \omega$, $A_n = \varphi(\langle U_0, U_1,...,
U_{n-1} \rangle) \subseteq U_n$, we have $X \subseteq \bigcup_{n
\in \omega} U_n$. If ONE has a winning strategy in the game
$Q_{\mathcal{K}}C(X)$ on $X$, we write $ONE {\uparrow}
Q_\mathcal{K}C(X)$.

A strategy $\psi : Q_\mathcal{K}^{< \omega} \rightarrow \tau_{c}$
for TWO in the game $Q_{\mathcal{K}}C(X)$ on $X$ is a winning
strategy for TWO if, for every sequence $\langle A_n : n \in
\omega \rangle$ of quasi-components of compact subsets of a space
$X$, we have $X \subseteq \bigcup_{n \in \omega} (\psi(\langle
A_0, A_1,..., A_n \rangle) = U_n)$. If TWO has a winning strategy
in the game $Q_{\mathcal{K}}C(X)$ on $X$, we write $TWO {\uparrow}
Q_\mathcal{K}C(X)$.

\begin{proposition} \label{c2} The compact-clopen game is equivalent to the $\mathcal{K}$-quasi-component-clopen game.
\end{proposition}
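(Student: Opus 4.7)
The key observation is the following: if $K$ is a compact subset of $X$ and $A = \bigcap\{U \subseteq X : U \text{ is clopen and } K \subseteq U\}$ is its quasi-component, then every point of $K$ lies in every clopen set containing $K$, so $K \subseteq A$; consequently a clopen set $V \subseteq X$ satisfies $K \subseteq V$ if and only if $A \subseteq V$. Since in both games TWO's sole obligation is to produce a clopen superset of ONE's previous move, this observation essentially makes the two games interchangeable once we know how to pass between a compact set and its quasi-component.

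With this in hand, I would prove the equivalence by verifying all four strategy transfers. Given a winning strategy $\varphi$ for ONE in the compact-clopen game, define a strategy for ONE in $Q_\mathcal{K}C(X)$ that, after history $\langle U_0,\dots,U_{n-1}\rangle$ of TWO's clopen moves, plays the quasi-component $A_n$ of $K_n = \varphi(\langle U_0,\dots,U_{n-1}\rangle)$; any legal reply $U_n \supseteq A_n$ is a clopen set containing $K_n$, so the induced play is a legal play against $\varphi$ and thus satisfies $X \subseteq \bigcup_n U_n$. Conversely, given a winning strategy $\varphi$ for ONE in $Q_\mathcal{K}C(X)$, for each quasi-component $A_n = \varphi(\langle U_0,\dots,U_{n-1}\rangle)$ pre-choose a compact set $K_n$ witnessing $A_n \in Q_\mathcal{K}$, and let the translated strategy return $K_n$; any clopen $U_n \supseteq K_n$ automatically contains $A_n$, so the induced $Q_\mathcal{K}C(X)$-play is won by $\varphi$.

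The arguments for TWO are entirely analogous. Given $\psi$ winning for TWO in the compact-clopen game, define $\psi'(\langle A_0,\dots,A_n\rangle) = \psi(\langle K_0,\dots,K_n\rangle)$ after pre-choosing compact witnesses $K_i$ for each $A_i$; the output is a clopen superset of $K_n$ and hence of $A_n$, so $\psi'$ is a legal strategy in $Q_\mathcal{K}C(X)$, and the losing condition $X \not\subseteq \bigcup_n U_n$ carries over directly. In the opposite direction, given $\psi$ winning for TWO in $Q_\mathcal{K}C(X)$, define $\psi'(\langle K_0,\dots,K_n\rangle) = \psi(\langle A_0,\dots,A_n\rangle)$ where $A_i$ is the quasi-component of $K_i$; the key observation again ensures legality, and the same sequence of clopen outputs appears in both plays.

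There is no substantive obstacle here; the only point that deserves care is the choice of compact witnesses $K_n$ for quasi-components $A_n$, which amounts to fixing once and for all a function $Q_\mathcal{K} \to \mathcal{K}$ sending each quasi-component to some compact set of which it is the quasi-component. Such a function exists because $Q_\mathcal{K}$ is defined so that every one of its elements arises in this way, and this is the only place where the Axiom of Choice is invoked. Everything else is a direct unpacking of definitions using the observation that clopen supersets of $K$ and of its quasi-component coincide.
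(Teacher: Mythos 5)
Your proposal is correct and takes essentially the same route as the paper: both arguments carry out the same four strategy translations via the correspondence $K \leftrightarrow Q[K]$, using precisely the observation that a clopen set contains $K$ if and only if it contains its quasi-component $Q[K]$. If anything, your treatment is slightly more careful than the paper's, since you make explicit the choice of a compact witness $K_n$ with $Q[K_n]=A_n$ (a point the paper's notation $\psi(\langle U_0,\dots,U_{n-1}\rangle)\in\varphi(\langle U_0,\dots,U_{n-1}\rangle)$ leaves ambiguous), and this exactness matters, as an arbitrary compact subset of $A_n$ would not do.
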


\begin{proof} Let $\varphi : \tau^{< \omega}_{c} \rightarrow \mathcal{K}$ be
a winning strategy for ONE in the compact-clopen game on a space
$X$. Then the function $\psi : \tau^{< \omega}_{c} \rightarrow
Q_\mathcal{K}$ such that $\psi(\langle U_0, U_1,..., U_{n-1}
\rangle)=Q[\varphi(\langle U_0, U_1,..., U_{n-1} \rangle)]$
($Q[K]$ is a quasi-component of $K \in \mathcal{K}$) for every
sequence $\langle U_n : n \in \omega \rangle$ of clopen subsets of
a space $X$ and $n \in \omega$,  is a winning strategy for ONE in
the $\mathcal{K}$-quasi-component-clopen game. This follows from
the fact that $K_n=\varphi(\langle U_0, U_1,..., U_{n-1}
\rangle)\in Q[K_n]\subseteq U_n$.

Let $\varphi : \tau^{< \omega}_{c} \rightarrow Q_\mathcal{K}$ be a
winning strategy for ONE in the
$\mathcal{K}$-quasi-component-clopen game on a space $X$. Then the
function $\psi : \tau^{< \omega}_{c} \rightarrow \mathcal{K}$ such
that $\psi(\langle U_0, U_1,..., U_{n-1} \rangle)\in
\varphi(\langle U_0, U_1,..., U_{n-1} \rangle)$ for every sequence
$\langle U_n : n \in \omega \rangle$ of clopen subsets of a space
$X$ and $n \in \omega$,  is a winning strategy for ONE in the
compact-clopen game. This follows from the fact that if $W$ is a
clopen set of $X$ and $K \subseteq W$ then $Q[K]\subseteq W$.

Let $\psi : \mathcal{K}^{< \omega} \rightarrow \tau_{c}$ be a
winning strategy for TWO in the compact-clopen game on $X$. Then
the function $\rho : Q_\mathcal{K}^{< \omega} \rightarrow
\tau_{c}$ such that $\rho(\langle A_0, A_1,..., A_n
\rangle)=\psi(\langle K_0, K_1,..., K_n \rangle)$ for every
sequence $\langle A_n : n \in \omega \rangle$ of quasi-components
of compact subsets $K_n$ of a space $X$ and some $K_0,...,K_n$
that $A_i=Q[K_i]$ for each $i=0,...,n$, is a winning strategy for
TWO in the $\mathcal{K}$-quasi-component-clopen game.

Let $\psi : Q_\mathcal{K}^{< \omega} \rightarrow \tau_{c}$ be a
winning strategy for TWO in the
$\mathcal{K}$-quasi-component-clopen game on $X$. Then the
function $\rho : \mathcal{K}^{< \omega} \rightarrow \tau_{c}$ such
that $\rho(\langle K_0, K_1,..., K_n \rangle)=\psi(\langle A_0,
A_1,..., A_n \rangle)$ for every sequence $\langle K_n : n \in
\omega \rangle$ of points of a space $X$ where $A_i=Q[K_i]$ for
each $i=0,...,n$, is a winning strategy for TWO in the
compact-clopen game.
\end{proof}

\begin{proposition} \label{c1} Suppose that $X$ is a union of countably many quasi-components of compact sets. Then TWO
has a winning strategy in the game
$G_{fin}(\mathcal{C}_\mathcal{O}, \mathcal{C}_\mathcal{O})$.

\end{proposition}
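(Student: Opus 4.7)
The plan is to construct an explicit winning strategy for TWO by exploiting two facts: compactness of the sets $K_n$ whose quasi-components cover $X$, and the defining property of a quasi-component (namely that every clopen set containing $K_n$ automatically contains $Q[K_n]=A_n$).

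First, fix an enumeration $X=\bigcup_{n\in\omega}A_n$ where each $A_n$ is the quasi-component of a compact set $K_n$, i.e.\ $A_n=\bigcap\{U:U\text{ clopen in }X,\ K_n\subseteq U\}$. This enumeration is chosen once, outside the game, so TWO is free to refer to $K_n$ in round $n$.

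Next, define TWO's strategy as follows. Suppose in inning $n$ player ONE plays a clopen cover $\mathcal{U}_n$ of $X$. Since $K_n$ is compact and $\mathcal{U}_n$ covers $K_n$, there is a finite subfamily $\mathcal{V}_n\subseteq\mathcal{U}_n$ with $K_n\subseteq\bigcup\mathcal{V}_n$. TWO plays this $\mathcal{V}_n$. Write $W_n=\bigcup\mathcal{V}_n$; being a finite union of clopen sets, $W_n$ is itself clopen.

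Finally, verify that this strategy wins. Because $W_n$ is a clopen set containing $K_n$, the definition of quasi-component gives $A_n\subseteq W_n$. Summing over $n$,
\[
X=\bigcup_{n\in\omega}A_n\subseteq\bigcup_{n\in\omega}W_n=\bigcup_{n\in\omega}\bigcup\mathcal{V}_n,
\]
so TWO wins the play. There is essentially no obstacle here: the only subtle point worth stating explicitly is that a finite union of clopen sets remains clopen, which is what allows the quasi-component definition to be applied to $W_n$. The compactness of $K_n$ is used only to guarantee that a finite $\mathcal{V}_n$ exists, and the countability of the enumeration of the $A_n$ is what matches the countable length of the game.
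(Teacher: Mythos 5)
Your proof is correct and follows essentially the same route as the paper's: fix the countable enumeration of quasi-components of compact sets in advance, use compactness of $K_n$ to extract a finite subfamily $\mathcal{V}_n$ of the $n$th clopen cover, and invoke the quasi-component definition (together with the fact that a finite union of clopen sets is clopen) to conclude $A_n\subseteq\bigcup\mathcal{V}_n$, whence the selections cover $X$. If anything, your write-up is slightly more careful than the paper's, which leaves implicit both the role of compactness and the clopenness of $\bigcup\mathcal{V}_n$.
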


\begin{proof}
Let $X = \bigcup_{i \in \omega} Q_{K_i}C(X)$, where $Q_{K_i}C(X)$ is a quasi-component of compact set $K_i$ for each $i$.

Let ONE starts $G_{fin}(\mathcal{C}_\mathcal{O},
\mathcal{C}_\mathcal{O})$ with his initial move $\mathcal{U}_0$, a
cover by clopen sets of $X$. Then TWO replies with a finite subset
$\mathcal{V}_0$ of $\mathcal{U}_0$ such that $K_1 \subseteq \bigcup \mathcal{V}_0$. Then $Q_{K_1}C(X) \subseteq \bigcup \mathcal{V}_0$.

If ONE plays $\mathcal{U}_1$ his next move, then TWO replies with
a finite subset $\mathcal{V}_1$ of $\mathcal{U}_1$ such that $K_2 \subseteq \bigcup \mathcal{V}_1$. Then $Q_{K_2}C(X) \subseteq \bigcup \mathcal{V}_1$.

If ONE plays $\mathcal{U}_2$ his next move, then TWO replies with
a finite subset $\mathcal{V}_2$ of $\mathcal{U}_2$ such that $K_3 \subseteq \bigcup \mathcal{V}_2$. Then $Q_{K_3}C(X) \subseteq \bigcup \mathcal{V}_2$
and so on.

If ONE plays $\mathcal{U}_n$ in $n$th inning, then TWO replies
with a finite subset $\mathcal{V}_n$ of $\mathcal{U}_n$ such that
$K_{n+1} \subseteq \bigcup \mathcal{V}_n$. Then $Q_{K_{n+1}}C(X) \subseteq \bigcup \mathcal{V}_n$.

In the same manner, we get a play of the game $G_{fin}(\mathcal{C}_\mathcal{O}, \mathcal{C}_\mathcal{O})$ :
\begin{center}
$\langle \mathcal{U}_0, \mathcal{V}_0, \mathcal{U}_1, \mathcal{V}_1,..., \mathcal{U}_n, \mathcal{V}_n,... \rangle$.
\end{center}
Since $X = \bigcup_{i \in \omega} Q_{K_i}C(X)$, $X = \bigcup_{n \in \omega} \bigcup \mathcal{V}_n$. This completes the proof.
\end{proof}

The following chain of implications always holds. Note that the
top equivalence $(*)$ follows from Telgarsky's equivalence
together with Theorem 3.2.

\begin{center}

$X$ is a union of countably many quasi-components of compact sets \\
$\Downarrow$ \\ $TWO \uparrow G_{fin}(\mathcal{C}_\mathcal{O},
\mathcal{C}_\mathcal{O})$ ${\Leftrightarrow\atop (*)}$ $ONE
\uparrow$ the compact-clopen game\\ $\Downarrow$ \\ $ONE
\not\uparrow G_{fin}(\mathcal{C}_\mathcal{O},
\mathcal{C}_\mathcal{O})$ \\ $\Updownarrow$ \\
$X$ has mildly Menger property.

\end{center}

\begin{example} Let $Z=X\times Y$ where $X$ is the one-point compactification of uncountable
discrete space $D$ and $Y$ is a connected non-$\sigma$-compact
space.
\end{example}

Then $Z$ is a quasi-component of compact set $X\times \{y\}$ for
some $y\in Y$, but $Z$ is not $\sigma$-compact and $Z$ does not
consist of countable number of quasi-components.

If possible suppose $Z$ is $\sigma$-compact. Then $Z = \cup_{i \in
\omega} X_i \times Y_i$, where $X_i$ is compact subset of $X$ and
$Y_i$ is a compact subset of $Y$ for each $i$. This means that $Y
= \cup_{i \in \omega} Y_i$ is $\sigma$-compact, a contradiction.

For each point $(x,y)$ of $X \times Y$, the quasi-component of
$(x,y)$ is $\{x\} \times Y$. Then $Z$ has uncountable number of
quasi-components. Since each $\{(x,y)\}$ is compact so it also
have uncountable number of quasi-components of compact sets.

\medskip

\begin{remark} A quasi-component of compact subset $B$ of a
zero-dimensional space $X$ is equal to $B$. It follows that, if
$X$ is countable union of quasi-components of compact subsets of
$X$ then $X$ is $\sigma$-compact.
\end{remark}

From Theorem \ref{d1}, we have the following remark.

\begin{remark} For a zero-dimensional second countable space $X$, TWO has a winning strategy in the game
$G_{fin}(\mathcal{C}_\mathcal{O}, \mathcal{C}_\mathcal{O})$ if and
only if $X$ is a $\sigma$-mildly compact space.
\end{remark}

\textbf{Determinacy and $G_{fin}(\mathcal{C}_\mathcal{O}, \mathcal{C}_\mathcal{O})$ game}

A game $G$ played between two players ONE and TWO is determined if
either ONE has a winning strategy in game $G$ or TWO has a winning
strategy in game $G$. Otherwise $G$ is undetermined.

It can be observed that the game $G_{fin}(\mathcal{C}_\mathcal{O},
\mathcal{C}_\mathcal{O})$ is determined for every $\sigma$--mildly
compact space. But in a non $\sigma$-mildly compact, mildly Menger
and a zero-dimensional metric space, none of the players ONE and
TWO have a winning strategy. Since a zero-dimensional metric
mildly Menger space is second countable,
$G_{fin}(\mathcal{C}_\mathcal{O}, \mathcal{C}_\mathcal{O})$ is
undetermined for a non $\sigma$-mildly compact, mildly Menger and
a zero-dimensional metric space. Thus every non $\sigma$-mildly
compact zero-dimensional mildly Menger metric space is
undetermined.

Recall that an uncountable set $L$ of reals is a Luzin set if for
each meager set $M$, $L \cap M$ is countable.  The Continuum
Hypothesis implies the existence of a Luzin set.

Sierpi\'{n}ski showed that Lusin sets of real numbers have the
Menger property. Since Lusin sets are not $\sigma$-compact they
are spaces where neither player has a winning strategy.

Now from Corollary 2 in \cite{H28b}, a Luzin set is an example of
a space for which the game $G_{fin}(\mathcal{C}_\mathcal{O},
\mathcal{C}_\mathcal{O})$ is undetermined.

\medskip

Then we present several questions, the answers to which will be a
natural continuation of the research within the framework of the
topic of this paper.

\medskip

{\bf Question 1.} Assume that $X,Y$ satisfy
$G_{fin}(\mathcal{C}_\mathcal{O}, \mathcal{C}_\mathcal{O})$
($S_{fin}(\mathcal{C}_\mathcal{O}, \mathcal{C}_\mathcal{O})$).
Does it follows that $X\times Y$ satisfies
$G_{fin}(\mathcal{C}_\mathcal{O}, \mathcal{C}_\mathcal{O})$
($S_{fin}(\mathcal{C}_\mathcal{O}, \mathcal{C}_\mathcal{O})$) ?

\medskip

{\bf Question 2.} Is $S_{fin}(\mathcal{C}_\mathcal{O},
\mathcal{C}_\mathcal{O})$ ($G_{fin}(\mathcal{C}_\mathcal{O},
\mathcal{C}_\mathcal{O}$) preserved by finite powers ?

\medskip

{\bf Question 3.} Are $S_{fin}(\mathcal{C}_\mathcal{O},
\mathcal{C}_\mathcal{O})$ and $G_{fin}(\mathcal{C}_\mathcal{O},
\mathcal{C}_\mathcal{O})$ hereditary for subsets representable as
a countable union of clopen sets ?

\medskip

{\bf Acknowledgements.} The authors would like to thank the
referee for careful reading and valuable comments.

\end{document}